\documentclass[twoside,reqno,12pt]{amsart}
\usepackage[active]{srcltx}
\usepackage{xcolor}
\usepackage{latexsym,rotate,eucal,cite}
\usepackage{amsmath,amsthm,amssymb,amsxtra}
\newtheorem{dl}{Theorem}[section]

\theoremstyle{definition}

\newtheorem{tl}[dl]{Corollary}
\newtheorem{lz}[dl]{Example}

\newtheorem{prop}[dl]{Proposition}

\numberwithin{equation}{section}

\def\qed{\hfill \rule{4pt}{7pt}}
\def\pf{\noindent {\it Proof.} }

\usepackage[hidelinks]{hyperref}

\begin{document} 
\title{Notices on a new  functional identity and allied series transformations}
\author{Jianan Xu*}
\address{Department of Mathematics,
Taizhou University,
Taizhou 225300,~P.~R.~China}
\curraddr{}
\email{xujn1205@163.com} 
\author{Qi Chen}
\address{
Department of Mathematics,
Soochow University,
Suzhou 215006,~P.~R.~China}
\curraddr{}
\curraddr{}
\email{20254007010@stu.suda.edu.cn}
\thanks{*Corresponding author.}

\begin{abstract}
In this paper, we establish an elementary functional identity
\begin{align*}
  \sum_{k=1}^{n}\bigg(\frac{f(a,d_k)}{f(a,x)}\bigg)^{n-1}\prod_{i=1,i \neq k}^{n} \frac{g(x,d_i)}{g(d_k,d_i)}=1
\end{align*}
based on a pair of functions $f(x, y)$ and $g(x,y)$ satisfying
$$
f(x, a) g(b, c)+f(x, b) g(c, a)+f(x, c) g(a, b)=0.
$$
The latter may serve as a prototype for the classical Weierstrass theta identity. As a direct application, we establish a general  transformation formula for  finite sums. Some special algebraic, triangular and elliptic series transformations  are also presented.

\vspace{6pt}
{\bf Keywords.} Finite sum; transformation; elliptic and theta function; Jacobi's identity; Weierstrass' theta identity.

\vspace{6pt}
{\bf MSC (2020).} Primary  33D90; Secondary  33D15, 33E05.
\end{abstract}

\maketitle\thispagestyle{empty}
\markboth{J. N. Xu and Q. Chen}{Notices on a new  functional identity and allied series transformations}


\parskip 7pt
\baselineskip 16pt



\section{Introduction}
Throughout this paper, we will follow
the notation and terminology in the book \cite{10} by Gasper and Rahman. The $q$-shifted factorials of complex variable $z$ with the base $q$, $|q|<1$, are given by
\begin{align*}
(z;q)_\infty
:=\prod_{n=0}^{\infty}(1-zq^n),\quad (z;q)_n
:=\frac{(z;q)_\infty}{(zq^n;q)_\infty}.
\end{align*} Jacobi's (modified) theta function (see \cite[Eq. (11.2.1)]{10})
is defined to be
\begin{align}
\theta(z;q):=(z,q/z;q)_\infty\quad \mbox{for}\quad z\neq 0.\label{1}
\end{align}
In addition, we often employ the following multi-parameter compact forms for any  complex variable $n$
\begin{align}
  (a_1,a_2,\ldots,a_n;q)_\infty &:=(a_1;q)_\infty (a_2;q)_\infty\cdots (a_n;q)_\infty,\nonumber\\
  \theta(a_1,a_2,\ldots,a_n;q)&:=\theta(a_1;q)\theta(a_2;q)\cdots\theta(a_n;q)\nonumber
\end{align}
and the $p,q$-shifted factorials
\begin{align}
  (a_1,a_2,\ldots,a_n;p,q)_m&:=\prod_{i=0}^{m-1}\theta(a_1p^i;q)\prod_{i=0}^{m-1}\theta(a_2p^i;q)\cdots\prod_{i=0}^{m-1}\theta(a_np^i;q).\label{ellptic-shift}
\end{align}

The classical  Weierstrass theta identity \cite[Ex.~2.16]{10} may be recorded as the following function identity with four non-zero parameters:
\begin{align}
\theta\left(cx,\frac{x}{c},bz,
\frac{z}{b};q\right)-\theta \left(bx,\frac{x}{b},cz,\frac{z}{c};q\right)=\frac{z}{c}\theta \left(bc,\frac{c}{b},
xz,\frac{x}{z};q\right).\label{weierstrass}
\end{align}
As of today,  Weierstrass' theta identity \eqref{weierstrass} has been proved to be very important  in the study of modular, elliptic and theta hypergeometric series. For this, we refer the reader to  \cite[Chap.~11]{10} and \cite{gasper,warnaar,koornwinder} for further details.  

In the previous paper \cite{xuxrma}, we have already  established two elementary functional identities closely related to a pair of functions satisfying
\begin{align} f(x,a)g(b,c)+f(x,b)g(c,a)+f(x,c)g(a,b)=0.\label{jacobi} \end{align}
 One  functional identity  is
\begin{dl}[Cf.{\rm \cite[Thm.~1]{xuxrma}}]Suppose that $g\perp g$ and $g(x,y)=-g(y,x)$. Then, for any pairwise distinct sequence $\{d_n\}_{n\geq 0}$, we have
\begin{align}
   \sum_{k=1}^{n}\bigg(\frac{g(a,d_k)}{g(a,x)}\bigg)^{n-1}\prod_{i=1, i \neq k}^{n} \frac{g(x,d_i)}{g(d_k,d_i)}=1.\label{bmms-one}
\end{align}
\end{dl}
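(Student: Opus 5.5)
Throughout, $g\perp g$ means that \eqref{jacobi} holds with $f=g$, that is, $g(x,a)g(b,c)+g(x,b)g(c,a)+g(x,c)g(a,b)=0$. My plan is to argue by induction on $n$, treating $x$ as the variable. For $n=1$ the sum is the single term $1$, since both the product and the power $(\cdot)^{0}$ are empty. For $n=2$ the sum is
\[
\frac{g(a,d_1)}{g(a,x)}\frac{g(x,d_2)}{g(d_1,d_2)}+\frac{g(a,d_2)}{g(a,x)}\frac{g(x,d_1)}{g(d_2,d_1)}.
\]
After clearing denominators and using antisymmetry, the claim becomes
\[
g(a,d_1)g(x,d_2)-g(a,d_2)g(x,d_1)=g(a,x)g(d_1,d_2).
\]
This is exactly \eqref{jacobi} with $f=g$, after relabelling the variables and using $g(y,z)=-g(z,y)$. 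So the base cases follow directly from the hypothesis $g\perp g$.

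For the inductive step I would pass to $g$-ratios, which behave like ``linear fractional'' coordinates. Fix $a$ and put $\phi(y)=g(y,d_n)/g(a,y)$ or a similar cross-ratio built from $g$. The three-term relation \eqref{jacobi} then says that each ratio $g(x,d_i)/g(a,x)$ is an affine function of one fixed coordinate $t(x)=g(x,b)/g(a,x)$, for a fixed auxiliary point $b$. Indeed, dividing \eqref{jacobi} by $g(x,a)g(b,d_i)$ gives
\[
\frac{g(x,d_i)}{g(x,a)}=\alpha_i+\beta_i\,\frac{g(x,b)}{g(x,a)},
\]
with constants $\alpha_i,\beta_i$ depending only on $a,b,d_i$. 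Multiplying by $g(a,x)^{n-1}$, the sum becomes
\[
\frac{\prod_{i\neq k}g(x,d_i)}{g(a,x)^{n-1}}=\prod_{i\neq k}\frac{g(x,d_i)}{g(a,x)},
\]
and each factor is affine in $t=t(x)$. So the left side of \eqref{bmms-one} is a polynomial of degree at most $n-1$ in $t$.

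Next I would evaluate this polynomial at the $n$ points $x=d_j$. At $x=d_j$ only the term $k=j$ survives, because every other term contains the factor $g(d_j,d_j)=0$; antisymmetry forces $g(y,y)=0$. The surviving term equals
\[
\Bigl(\frac{g(a,d_j)}{g(a,d_j)}\Bigr)^{n-1}\cdot 1=1.
\]
The points $t(d_j)$ are pairwise distinct, since the $d_j$ are distinct and $t$ is injective by \eqref{jacobi}. Hence a polynomial of degree at most $n-1$ taking the value $1$ at $n$ distinct points is identically $1$. This is really Lagrange interpolation, and it actually replaces the induction.

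The main obstacle is making the affine-coordinate argument rigorous under only the abstract hypotheses $g\perp g$ and antisymmetry. In particular one needs a valid auxiliary point $b$ with $g(a,b)\neq0$, and one needs injectivity of $t$ on the $d_j$, i.e.\ $t(d_i)=t(d_j)$ should force $g(d_i,d_j)=0$. If these genericity issues become awkward, I would fall back to an explicit induction. In that version I would split off the $k=n$ term, apply \eqref{jacobi} to rewrite $g(x,d_n)/g(d_k,d_n)$, and reduce the sum to two instances of the $(n-1)$ case, with $x$ replaced by $x$ and by $d_n$.
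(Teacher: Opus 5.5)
Your interpolation argument is sound in the generic case. It is a genuinely different route from the paper's, and it misses one degenerate case, treated in the second paragraph.

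\textbf{Comparison with the paper.} The paper does not reprove this statement; it is quoted from \cite{xuxrma}. Inside the paper it is the case $f=g$ of Theorem~\ref{mainthm-before}, proved by induction on $n$ in the cleared form $g(a,x)^{n-1}=\sum_k g(a,d_k)^{n-1}\prod_{i\neq k}g(x,d_i)/g(d_k,d_i)$. The $m$-case is multiplied by $g(a,x)$, the factor $g(x,d_{m+1})/g(d_k,d_{m+1})$ is inserted, $g(a,x)g(d_k,d_{m+1})$ is expanded by \eqref{jacobi}, and the leftover sum is the $m$-case at $x=d_{m+1}$. That is exactly your fallback.

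Your main route instead makes $t(x)=g(x,b)/g(x,a)$ an affine coordinate in which each $g(x,d_i)/g(a,x)$ is affine. The left side is then a polynomial of degree $\le n-1$ in $t$ equal to $1$ at the $n$ nodes $t(d_j)$. This explains the identity as Lagrange interpolation and removes the induction. It also extends directly to Theorem~\ref{mainthm-before}, because \eqref{jacobi} for $f\perp g$ makes $g(x,d_i)/f(a,x)$ affine in $g(x,b)/f(a,x)$. The price is nondegeneracy bookkeeping that the paper's cleared-denominator induction never needs, since it never divides by $g(a,d_k)$.

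Your two listed worries are harmless. You may take $b=x$, because $g(a,x)\neq0$. And \eqref{jacobi} gives $t(d_i)-t(d_j)=-g(a,b)g(d_i,d_j)/\bigl(g(d_i,a)g(d_j,a)\bigr)\neq0$. A small correction: to isolate $g(x,d_i)/g(x,a)$ you divide by $g(x,a)g(a,b)$, not by $g(x,a)g(b,d_i)$. This gives $\alpha_i=-g(b,d_i)/g(a,b)$ and $\beta_i=g(a,d_i)/g(a,b)$.

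\textbf{The missing case.} Your plan neither handles nor lists the case $g(a,d_j)=0$ for some $j$, which the hypotheses allow (e.g.\ $g(x,y)=x-y$ with $a=d_1$). Then $t(d_j)$ is undefined and $x=d_j$ lies outside the domain of the left side. Your ``surviving term'' $\bigl(g(a,d_j)/g(a,d_j)\bigr)^{n-1}$ becomes $0/0$, so the evaluation step fails as written.

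The repair is short. With $g(d_j,a)=0$, \eqref{jacobi} gives $g(y,d_j)=-\frac{g(b,d_j)}{g(a,b)}\,g(y,a)$, so the $i=j$ factor $g(y,d_j)/g(a,y)$ is constant. The $k=j$ term then vanishes for $n\ge 2$, so $P$ has degree $\le n-2$ and equals $1$ at the remaining $n-1$ nodes. At most one $j$ can be degenerate, since two would force some $g(d_j,d_l)=0$.

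A cleaner alternative is to work homogeneously. For any $p,q$ with $g(p,q)\neq 0$, set $v(y)=(g(y,p),g(y,q))$. Then \eqref{jacobi} gives $g(p,q)\,g(y,z)=\det\bigl(v(y),v(z)\bigr)$. The cleared identity becomes Lagrange interpolation of the binary form $w\mapsto\det\bigl(v(a),w\bigr)^{n-1}$ at the $n$ pairwise non-proportional vectors $v(d_k)$, with no exceptional cases.
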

Hereafter, we use $f \perp g$ for  such a pair of functions  subject to \eqref{jacobi} while \eqref{jacobi} is called Jacobi's identity. 
We refer the reader to \cite{fgsummation,wangjinpaper,xuxrma} for its applications to series transformations. In this paper, we will
 extend \eqref{weierstrass}-\eqref{bmms-one} to the following

 \begin{dl}\label{mainthm-before} Suppose that $f\perp g$, $g(x,y)=-g(y,x)$. Then, for any pairwise distinct sequence $\{d_n\}_{n\geq 0}$, we have
 \begin{align}
   \sum_{k=1}^{n}\bigg(\frac{f(a,d_k)}{f(a,x)}\bigg)^{n-1}\prod_{i=1, i \neq k}^{n} \frac{g(x,d_i)}{g(d_k,d_i)}=1.\label{www-www-www-333}
\end{align} 
\end{dl}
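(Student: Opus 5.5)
The plan is to prove \eqref{www-www-www-333} by induction on $n$, reading it as a Lagrange-type interpolation statement: the function $x\mapsto f(a,x)^{n-1}$ is recovered from its values at the nodes $d_1,\dots,d_n$ through the ``$g$-Lagrange basis''
$$L^{(n)}_k(x)=\prod_{i=1,i\neq k}^{n}\frac{g(x,d_i)}{g(d_k,d_i)}.$$
So we show $P_n(x):=\sum_{k=1}^n f(a,d_k)^{n-1}L^{(n)}_k(x)=f(a,x)^{n-1}$. The case $n=1$ is trivial, since both sides equal $1$. Throughout we tacitly assume $g(d_k,d_i)\neq0$ for $i\ne k$ and $f(a,x)\neq 0$, as the statement implicitly requires.

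\textbf{Key identity.} We will use Jacobi's identity \eqref{jacobi} with first argument $a$ and the triple $(x,d_k,d_n)$, together with the antisymmetry of $g$. This gives
$$f(a,d_k)\,g(x,d_n)=f(a,x)\,g(d_k,d_n)+f(a,d_n)\,g(x,d_k).$$
Dividing by $g(d_k,d_n)$, and using $L^{(n)}_k=L^{(n-1)}_k\cdot g(x,d_n)/g(d_k,d_n)$ for $k<n$, we get
$$f(a,d_k)\,L^{(n)}_k(x)=f(a,x)\,L^{(n-1)}_k(x)+f(a,d_n)\,L^{(n-1)}_k(x)\frac{g(x,d_k)}{g(d_k,d_n)}.$$

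\textbf{Inductive step.} Multiply the key identity by $f(a,d_k)^{n-2}$ and sum over $k<n$. The first piece gives $f(a,x)\cdot P_{n-1}(x)=f(a,x)^{n-1}$ by the induction hypothesis. The remaining pieces are the second sum together with the $k=n$ term of $P_n$. After factoring out $f(a,d_n)\prod_{i<n}g(x,d_i)$ and using antisymmetry of $g$, they combine into
$$f(a,d_n)\prod_{i=1}^{n-1}g(x,d_i)\cdot\sum_{k=1}^{n}\frac{f(a,d_k)^{n-2}}{\prod_{i=1,i\neq k}^{n}g(d_k,d_i)}.$$
Hence it suffices to prove the ``leading coefficient'' vanishing
$$Q_n:=\sum_{k=1}^{n}\frac{f(a,d_k)^{n-2}}{\prod_{i\neq k}g(d_k,d_i)}=0\qquad(n\ge2).$$

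\textbf{Vanishing of $Q_n$.} I expect establishing $Q_n=0$ to be the main obstacle. It turns out to follow from the induction hypothesis itself. Apply the $(n-1)$-case of \eqref{www-www-www-333}, with nodes $d_1,\dots,d_{n-1}$, at the point $x=d_n$. Then divide both sides by $\prod_{i<n}g(d_n,d_i)$. On the right, each $k$-term becomes $f(a,d_k)^{n-2}/\bigl(g(d_n,d_k)\prod_{i\ne k,i<n}g(d_k,d_i)\bigr)$. Rewriting $g(d_n,d_k)=-g(d_k,d_n)$, this is exactly $-f(a,d_k)^{n-2}/\prod_{i\neq k,i\le n}g(d_k,d_i)$. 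Moving everything to one side gives precisely $Q_n=0$. (For $n=2$ this is just $1/g(d_1,d_2)+1/g(d_2,d_1)=0$.) This closes the induction and yields \eqref{www-www-www-333}.

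The only delicate points are the sign bookkeeping with the antisymmetry of $g$ and the nonvanishing of the denominators.
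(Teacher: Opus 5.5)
Your proposal is correct and takes essentially the same route as the paper. Both argue by induction on the restated form $f(a,x)^{n-1}=\sum_k f(a,d_k)^{n-1}\prod_{i\ne k} g(x,d_i)/g(d_k,d_i)$. Both peel off one factor $f(a,x)$ using Jacobi's identity on the triple $(x,d_k,d_n)$. Both then dispose of the leftover term (your $Q_n=0$, the paper's $R_m$ identity) by evaluating the induction hypothesis at $x=d_n$. The differences are cosmetic: you start the induction at $n=1$ rather than $n=2$, and you phrase the residual as a vanishing ``leading coefficient.''
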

The proof of this theorem will be presented in the next section. In Section \ref{sec3}, we will establish  the following new transformation which builds on  the  special case $n=3$ of \eqref{www-www-www-333}.

\begin{dl}\label{mainthm-two}Suppose that $f\perp g$ and $g(x,y)=-g(y,x)$. For any integer $n$ and pairwise distinct sequence $\{x_n,a_n,b_n,c_n,d_n\}_{n\geq 0}$, we have
\begin{align}
 T_1(n)+T_2(n)=U_0-U_n,   
\end{align}
where
\begin{subequations}
\begin{align}
T_1(n)&:=\sum^{n}_{k=1}\frac{f(x_k,d_k)^2g(a_k,b_k)g(a_k,c_k)}{f(x_k,a_k)^{2}
g(d_k,b_k)g(d_k,c_k)}U_{k-1},\label{summand-0}\\
T_2(n)&:=\sum^{n}_{k=1}\frac{f(x_k,c_k)^{2}g(a_k,b_k)g(a_k,d_k)}{f(x_k,a_k)^{2}
g(c_k,b_k)g(c_k,d_k)}U_{k-1}\label{summand-1}
\end{align}
and
\begin{align}
U_n:=\prod_{k=0}^n\frac{f(x_k,b_k)^{2}
  g(a_k,c_k)g(a_k,d_k)}{f(x_k,a_k)^{2}
g(b_k,c_k)g(b_k,d_k)}.\label{summand-2}
\end{align}  
\end{subequations}
\end{dl}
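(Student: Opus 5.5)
The plan is to reduce Theorem \ref{mainthm-two} to a telescoping identity built from the case $n=3$ of Theorem \ref{mainthm-before}. Fix $k$ and apply \eqref{www-www-www-333} with $n=3$. Take $a=x_k$, $x=a_k$, and the three nodes $d_1,d_2,d_3$ equal to $b_k,c_k,d_k$. The exponent is $n-1=2$, which matches the squares of $f$ in \eqref{summand-0}--\eqref{summand-2}. The result is
\begin{align*}
\frac{f(x_k,b_k)^2 g(a_k,c_k)g(a_k,d_k)}{f(x_k,a_k)^2 g(b_k,c_k)g(b_k,d_k)}
+\frac{f(x_k,c_k)^2 g(a_k,b_k)g(a_k,d_k)}{f(x_k,a_k)^2 g(c_k,b_k)g(c_k,d_k)}
+\frac{f(x_k,d_k)^2 g(a_k,b_k)g(a_k,c_k)}{f(x_k,a_k)^2 g(d_k,b_k)g(d_k,c_k)}=1 .
\end{align*}
The first term is the ratio $\rho_k:=U_k/U_{k-1}$, i.e. the $k$-th factor of the product \eqref{summand-2}. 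The third term is the summand weight of $T_1$, and the second is the summand weight of $T_2$. Denote these two weights by $\alpha_k$ and $\beta_k$. The identity then reads $\alpha_k+\beta_k=1-\rho_k$.

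Next I multiply this relation by $U_{k-1}$. Since $\rho_k U_{k-1}=U_k$, this gives
\[
\alpha_k U_{k-1}+\beta_k U_{k-1}=U_{k-1}-U_k .
\]
Summing over $k=1,\dots,n$, the left side is $T_1(n)+T_2(n)$ by definition. The right side telescopes to $U_0-U_n$, which is the claimed identity.

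The computation needs only Theorem \ref{mainthm-before}; the antisymmetry $g(x,y)=-g(y,x)$ is already used inside that theorem. The only real care is bookkeeping. First, I must match the arguments of $g(x,d_i)/g(d_k,d_i)$ in \eqref{www-www-www-333} with the orientations written in \eqref{summand-0}--\eqref{summand-2}, e.g. $g(d_k,b_k)$ versus $g(b_k,d_k)$; antisymmetry resolves any sign discrepancy. Second, I must note that $U_n$ starts at $k=0$. Hence $U_0$ is the single factor $\rho_0$ and needs no convention for an empty product.

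Distinctness of $a_k,b_k,c_k,d_k$ and non-vanishing of $f(x_k,a_k)$ are implicit hypotheses ensuring all denominators are nonzero. I expect no serious obstacle beyond verifying the variable assignment $a\mapsto x_k$, $x\mapsto a_k$ in the $n=3$ instance.
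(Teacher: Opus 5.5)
Your proposal is correct and is essentially the paper's proof. Both obtain the per-step relation $1-U_k/U_{k-1}=\alpha_k+\beta_k$ from the $n=3$ case of Theorem~\ref{mainthm-before}, then telescope $\sum_{k=1}^n(U_{k-1}-U_k)=U_0-U_n$. The only cosmetic difference is the route to that relation: the paper passes through the four-node specialization $x=d_4$ and its rearrangement \eqref{www-www-www-555}, then substitutes $(a,d_1,d_2,d_3,d_4)\to(x_k,a_k,b_k,c_k,d_k)$. You instead substitute $a=x_k$, $x=a_k$ directly, and there the orientations of $g$ already match with no sign adjustments needed.
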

Some concrete and useful cases covered by this transformation are presented.

\section{The proof of Theorem \ref{mainthm-before}}\label{sec2}
This section is devoted to  the proof of Theorem \ref{mainthm-before}. 

\begin{proof}  We proceed to show  \eqref{www-www-www-333} by induction on $n$. For this purpose, we first restate it as follows:
\begin{align}
f(a,x)^{n-1}=\sum_{k=1}^{n}f(a,d_k)^{n-1}\prod_{i=1, i \neq k}^{n} \frac{g(x,d_i)}{g(d_k,d_i)}.\label{chazhiji-newnew-ooo-mmm-1}
\end{align}  
First, when $n=2$, \eqref{chazhiji-newnew-ooo-mmm-1}  turns out to be
    \begin{align*}
    f(a,x)=f(a,d_1)\frac{g(x,d_2)}{g(d_1,d_2)}+f(a,d_2)\frac{g(x,d_1)}{g(d_2,d_1)}.
    \end{align*}
    It is obviously equivalent to \eqref{jacobi}. Assume that \eqref{chazhiji-newnew-ooo-mmm-1} is valid for $n=m$, which says 
    \begin{align}
f(a,x)^{m-1}=\sum_{k=1}^{m}f(a,d_k)^{m-1}\prod_{i=1, i \neq k}^{m} \frac{g(x,d_i)}{g(d_k,d_i)}.\label{chazhiji-newnew-ooo-mmm-2}
\end{align}
    Next, consider the case $n=m+1$. It follows from the induction hypothesis \eqref{chazhiji-newnew-ooo-mmm-2} that
    \begin{align*}
f(a,x)^{m}&=f(a,x)~ f(a,x)^{m-1}\\
&=f(a,x)\sum_{k=1}^{m}f(a,d_k)^{m-1}\prod_{i=1, i \neq k}^{m} \frac{g(x,d_i)}{g(d_k,d_i)}\\
&=  \sum_{k=1}^{m}f(a,x)g(d_k,d_{m+1})\frac{f(a,d_k)^{m-1}}{g(x,d_{m+1})}\prod_{i=1, i \neq k}^{m+1} \frac{g(x,d_i)}{g(d_k,d_i)}.
\end{align*}
Since, by  Jacobi's identity \eqref{jacobi},
$$f(a,x)g(d_k,d_{m+1})=f(a,d_k)g(x,d_{m+1})-f(a,d_{m+1})g(x,d_k),$$
thus
 \begin{align*}
f(a,x)^{m}&= \sum_{k=1}^{m}f(a,d_k)g(x,d_{m+1})\frac{f(a,d_k)^{m-1}}{g(x,d_{m+1})}\prod_{i=1, i \neq k}^{m+1} \frac{g(x,d_i)}{g(d_k,d_i)} \\
&-\sum_{k=1}^{m} f(a,d_{m+1})g(x,d_k)\frac{f(a,d_k)^{m-1}}{g(x,d_{m+1})}\prod_{i=1, i \neq k}^{m+1} \frac{g(x,d_i)}{g(d_k,d_i)}\\
&=\sum_{k=1}^{m}f(a,d_k)^{m}\prod_{i=1, i \neq k}^{m+1} \frac{g(x,d_i)}{g(d_k,d_i)}+R_m
\end{align*}
All remains to show
 \begin{align*}
 R_m&:=-
f(a,d_{m+1})\prod_{i=1}^{m+1}g(x,d_i)\sum_{k=1}^{m}\frac{f(a,d_k)^{m-1}}{g(x,d_{m+1})}\prod_{i=1, i \neq k}^{m+1} \frac{1}{g(d_k,d_i)}\\
&=f(a,d_{m+1})^{m}\prod_{i=1}^{m} \frac{g(x,d_i)}{g(d_{m+1},d_i)}.
\end{align*}
After a bit simplification, it is equivalent to
 \begin{align*}
\sum_{k=1}^{m}f(a,d_k)^{m-1}\prod_{i=1, i \neq k}^{m} \frac{g(d_{m+1},d_i)}{g(d_k,d_i)}=f(a,d_{m+1})^{m-1}.
\end{align*}
It is just the special case $x=d_{m+1}$ of \eqref{chazhiji-newnew-ooo-mmm-2}. Thus, \eqref{chazhiji-newnew-ooo-mmm-1} holds for $n=m+1$. By induction, we conclude that \eqref{chazhiji-newnew-ooo-mmm-1}, thus \eqref{www-www-www-333}, is  true for all integers $n\geq 2$.
 \end{proof}
 We emphasize here that \eqref{www-www-www-333} is different from \eqref{bmms-one}  in that the former is a generalization of Jacobi's identity \eqref{jacobi}. It is just this transformation with which we are able to set up a transformation of finite sums. To that end,  we need  the following special case of \eqref{www-www-www-333}. 
\begin{tl}[$x=d_{n+1}$]For any pairwise distinct sequence $\{d_n\}_{n\geq 0}$,we have
    \begin{align}
   \sum_{k=1}^n\bigg(\frac{f(a,d_k)}{f(a,d_{n+1})}\bigg)^{n-1}
   \prod_{i=1,i\neq k}^{n}\frac{g(d_{n+1},d_i)}{g(d_k,d_i)} =1.\label{www-www-www-3333}
\end{align} 
\end{tl}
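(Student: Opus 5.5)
This corollary is obtained from Theorem \ref{mainthm-before} by specializing $x$. The plan is to use the polynomial-free form \eqref{chazhiji-newnew-ooo-mmm-1} established in the proof, namely
\begin{align*}
f(a,x)^{n-1}=\sum_{k=1}^{n}f(a,d_k)^{n-1}\prod_{i=1, i \neq k}^{n} \frac{g(x,d_i)}{g(d_k,d_i)},
\end{align*}
which holds identically in $x$ for any pairwise distinct $d_1,\dots,d_n$. The key point is that this form involves no division by $f(a,x)$ or by $g(x,d_i)$. Hence the substitution $x=d_{n+1}$ is legitimate as soon as $d_{n+1}$ is distinct from $d_1,\dots,d_n$, which is guaranteed by the hypothesis that the sequence is pairwise distinct. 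The only denominators are then the $g(d_k,d_i)$ with $i\neq k$, and these are already present in the theorem.

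Concretely, I will first set $x=d_{n+1}$ in \eqref{chazhiji-newnew-ooo-mmm-1} to obtain
\begin{align*}
f(a,d_{n+1})^{n-1}=\sum_{k=1}^{n}f(a,d_k)^{n-1}\prod_{i=1, i \neq k}^{n} \frac{g(d_{n+1},d_i)}{g(d_k,d_i)}.
\end{align*}
Dividing both sides by $f(a,d_{n+1})^{n-1}$ then gives \eqref{www-www-www-3333}.

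The only thing to check is that this division is allowed, i.e.\ that $f(a,d_{n+1})\neq 0$. This is implicit in the statement, since the left side of \eqref{www-www-www-3333} has $f(a,d_{n+1})$ in a denominator. It is the same tacit nonvanishing assumption under which \eqref{www-www-www-333} itself is written with $f(a,x)$ in the denominator. I expect no real obstacle here. The only subtlety is to specialize the cleared-denominator identity rather than \eqref{www-www-www-333} directly, so that no limiting argument in $x$ is needed. For $n=1$ the identity is trivial, since the empty product equals $1$, and for $n\ge 2$ it follows from the theorem as described.
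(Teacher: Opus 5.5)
Your proposal is correct and takes essentially the same route as the paper. The paper obtains the corollary simply as the specialization $x=d_{n+1}$ of Theorem~\ref{mainthm-before}, and inside its own induction step it uses exactly this cleared-denominator specialization of \eqref{chazhiji-newnew-ooo-mmm-2}. Substituting directly into \eqref{www-www-www-333} would work equally well, since there $x$ enters only through $f(a,x)$ in a denominator and through the numerators $g(x,d_i)$; so your detour through the cleared form is harmless but not actually needed.
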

\begin{prop}[$n=2$:  Jacobi's identity] 
\begin{align}
f(a,d_1)g(d_{2},d_3)+f(a,d_2)g(d_{3},d_1)+f(a,d_{3})g(d_{1},d_2)=0.\label{www-www-www-444}
\end{align}
\end{prop}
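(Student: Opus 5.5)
The plan is to specialize the Corollary, i.e. \eqref{www-www-www-3333}, to $n=2$ and then clear denominators. No induction is needed. With $n=2$ the exponent $n-1$ equals $1$, and each product contains a single factor. The identity therefore reads
\begin{align*}
\frac{f(a,d_1)}{f(a,d_3)}\,\frac{g(d_3,d_2)}{g(d_1,d_2)}+\frac{f(a,d_2)}{f(a,d_3)}\,\frac{g(d_3,d_1)}{g(d_2,d_1)}=1 .
\end{align*}

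Next, multiply through by $f(a,d_3)g(d_1,d_2)$, which is legitimate where the denominators are nonzero. Using the antisymmetry $g(d_2,d_1)=-g(d_1,d_2)$, the second denominator becomes $-g(d_1,d_2)$, and this gives
\begin{align*}
f(a,d_1)g(d_3,d_2)-f(a,d_2)g(d_3,d_1)=f(a,d_3)g(d_1,d_2).
\end{align*}
Applying antisymmetry once more, $g(d_3,d_2)=-g(d_2,d_3)$, and moving every term to one side yields exactly \eqref{www-www-www-444}.

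A more direct route is also available. \eqref{www-www-www-444} is literally \eqref{jacobi} with $(x,a,b,c)$ renamed as $(a,d_1,d_2,d_3)$, so under the hypothesis $f\perp g$ it holds by definition. I would mention this as the primary justification and present the specialization above as a consistency check, showing that the $n=2$ case of \eqref{www-www-www-3333} recovers Jacobi's identity.

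There is no real obstacle. The only point needing care is the bookkeeping of signs from the antisymmetry of $g$. One should also note that the divided form requires $f(a,d_3)\neq 0$ and distinct $d_i$, whereas the polynomial form \eqref{www-www-www-444} holds unconditionally by \eqref{jacobi}.
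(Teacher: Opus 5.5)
Your proposal is correct and follows the paper's proof. Like the paper, you specialize \eqref{www-www-www-3333} to $n=2$, multiply through by $f(a,d_3)g(d_1,d_2)$, and use $g(y,x)=-g(x,y)$ twice; your sign bookkeeping checks out. Your further remark that \eqref{www-www-www-444} is literally \eqref{jacobi} under $(x,a,b,c)\mapsto(a,d_1,d_2,d_3)$ is also right, and it is the cleaner justification. The $n=2$ base case of Theorem~\ref{mainthm-before} was itself proved from \eqref{jacobi}, so the paper's derivation is really a consistency check rather than an independent proof.
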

\pf In this case, \eqref{www-www-www-3333} reduces to
\begin{align*}
   \frac{f(a,d_1)}{f(a,d_3)}
   \frac{g(d_3,d_2)}{g(d_1,d_2)}+\frac{f(a,d_2)}{f(a,d_3)}
  \frac{g(d_{3},d_1)}{g(d_2,d_1)} = 1,
\end{align*}  
which coincides with  Jacobi's identity \eqref{www-www-www-444}.
\qed
\begin{prop}[$n=3$] 
\begin{align}
&  f(a,d_1)^{2}
g(d_2,d_3)g(d_{2},d_4)g(d_{3},d_4)
-f(a,d_2)^{2}
  g(d_1,d_{3})g(d_1,d_4)g(d_3,d_{4})\nonumber\\
&+f(a,d_3)^{2}g(d_1,d_2)g(d_1,d_{4})g(d_2,d_{4})
=f(a,d_{4})^2g(d_1,d_2)g(d_1,d_3)g(d_2,d_3).\label{www-www-www-www-444}
\end{align}
\end{prop}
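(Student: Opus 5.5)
The plan is to read \eqref{www-www-www-www-444} off directly from the corollary \eqref{www-www-www-3333} with $n=3$, in the same way the preceding proposition was obtained from the case $n=2$. For $n=3$, \eqref{www-www-www-3333} reads
\begin{align*}
\sum_{k=1}^{3}\frac{f(a,d_k)^2}{f(a,d_4)^2}\prod_{i=1,i\neq k}^{3}\frac{g(d_4,d_i)}{g(d_k,d_i)}=1 .
\end{align*}
I will multiply both sides by the common factor $f(a,d_4)^2D$, where $D:=g(d_1,d_2)g(d_1,d_3)g(d_2,d_3)$. The right-hand side then becomes $f(a,d_4)^2g(d_1,d_2)g(d_1,d_3)g(d_2,d_3)$, which is exactly the right-hand side of \eqref{www-www-www-www-444}.

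The three summands on the left are then simplified one at a time, using only the antisymmetry $g(x,y)=-g(y,x)$.
\begin{itemize}
\item[(i)] For $k=1$, the quotient $D/(g(d_1,d_2)g(d_1,d_3))$ equals $g(d_2,d_3)$. Rewriting $g(d_4,d_2)g(d_4,d_3)=g(d_2,d_4)g(d_3,d_4)$ involves two sign changes, which cancel. The term is therefore $f(a,d_1)^2g(d_2,d_3)g(d_2,d_4)g(d_3,d_4)$.
\item[(ii)] For $k=2$, the quotient is $D/(g(d_2,d_1)g(d_2,d_3))=-g(d_1,d_3)$, since $g(d_2,d_1)=-g(d_1,d_2)$. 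Also $g(d_4,d_1)g(d_4,d_3)=g(d_1,d_4)g(d_3,d_4)$. This produces $-f(a,d_2)^2g(d_1,d_3)g(d_1,d_4)g(d_3,d_4)$.
\item[(iii)] For $k=3$, the quotient is $D/(g(d_3,d_1)g(d_3,d_2))=g(d_1,d_2)$, with two sign changes. Together with $g(d_4,d_1)g(d_4,d_2)=g(d_1,d_4)g(d_2,d_4)$, this gives $f(a,d_3)^2g(d_1,d_2)g(d_1,d_4)g(d_2,d_4)$.
\end{itemize}
Summing (i)--(iii) gives precisely the left-hand side of \eqref{www-www-www-www-444}.

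There is no genuine obstacle here. The only point requiring care is the sign bookkeeping in step (ii), where an odd number of argument swaps produces the minus sign in \eqref{www-www-www-www-444}. The distinctness of the $d_i$ ensures the $g(d_k,d_i)$ are nonzero, so the corollary applies and the multiplication is legitimate. The case $f(a,d_4)=0$ is excluded in \eqref{www-www-www-3333} as written. If one wants \eqref{www-www-www-www-444} unconditionally, it is safer to start instead from the cleared form \eqref{chazhiji-newnew-ooo-mmm-1} with $n=3$ and $x=d_4$, which has no $f$ in a denominator. The same three computations then apply verbatim.
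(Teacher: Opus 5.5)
Your proof is correct and is the paper's argument: specialize \eqref{www-www-www-3333} to $n=3$, clear the denominators, and use the antisymmetry of $g$. Your sign bookkeeping in (i)--(iii) checks out, and starting instead from the cleared form \eqref{chazhiji-newnew-ooo-mmm-1} with $x=d_4$, to avoid dividing by $f(a,d_4)$, is a sensible refinement that the paper leaves implicit.
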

\pf In this case,
 \eqref{www-www-www-3333} corresponds to
\begin{align*}
   \bigg(\frac{f(a,d_1)}{f(a,d_{4})}\bigg)^{2}
   \frac{g(d_{4},d_2)}{g(d_1,d_2)}\frac{g(d_{4},d_3)}{g(d_1,d_3)}+\bigg(\frac{f(a,d_2)}{f(a,d_{4})}\bigg)^{2}
  \frac{g(d_{4},d_1)}{g(d_2,d_1)} \frac{g(d_{4},d_3)}{g(d_2,d_3)}\\
  +\bigg(\frac{f(a,d_3)}{f(a,d_{4})}\bigg)^{2}\frac{g(d_{4},d_1)}{g(d_3,d_1)}
   \frac{g(d_{4},d_2)}{g(d_3,d_2)} =1,
\end{align*}
which turns out to be \eqref{www-www-www-www-444}.
\qed

\section{An allied series transformation}\label{sec3}

To proceed, we first show Theorem \ref{mainthm-two}.  Actually, this transformation belongs to  a direct application of \eqref{www-www-www-www-444} to finite sums.

\pf  Observe that \eqref{www-www-www-www-444} can be reformulated as
\begin{align}
&  f(a,d_1)^{2}
g(d_2,d_3)g(d_2,d_{4})
-f(a,d_2)^{2}
  g(d_1,d_3)g(d_1,d_{4})\nonumber\\
&
=\frac{g(d_1,d_2)}{g(d_3,d_{4})}(f(a,d_{4})^2g(d_1,d_3)g(d_2,d_3)-f(a,d_3)^{2}g(d_1,d_{4})g(d_2,d_{4})),\label{www-www-www-555}
\end{align}
where $a,d_1,d_2,d_3,d_4$ are arbitrary but pairwise distinct complex numbers. 
Now we consider the finite sum
\begin{align}
\sum^{n}_{k=1}(U_{k-1}-U_k)=U_0-U_n,\label{sumsum}
\end{align}
where
\begin{align}
U_n:=\prod_{k=0}^n\frac{f(x_k,b_k)^{2}
  g(a_k,c_k)g(a_k,d_k)}{f(x_k,a_k)^{2}
g(b_k,c_k)g(b_k,d_k)}.\label{summand}
\end{align}
Subsequently, it is without any difficulty to  check 
\begin{align*}
&\mbox{LHS of \eqref{sumsum}}=\sum^{n}_{k=1}\bigg(1-\frac{U_k}{U_{k-1}}\bigg)U_{k-1}\\
&=\sum^{n}_{k=1}\bigg\{1-\frac{f(x_k,b_k)^{2}
  g(a_k,c_k)g(a_k,d_k)}{f(x_k,a_k)^{2}
g(b_k,c_k)g(b_k,d_k)}\bigg\}U_{k-1}\\
&=\sum^{n}_{k=1}\frac{f(x_k,a_k)^{2}
g(b_k,c_k)g(b_k,d_k)-f(x_k,b_k)^{2}
  g(a_k,c_k)g(a_k,d_k)}{f(x_k,a_k)^{2}
g(b_k,c_k)g(b_k,d_k)}U_{k-1}.
\end{align*}
Next, by applying \eqref{www-www-www-555} to the numerator  and simplifying, we come up with
\begin{align*}
&\mbox{LHS of \eqref{sumsum}}\\&=\sum^{n}_{k=1}\frac{f(x_k,d_k)^2g(a_k,c_k)g(b_k,c_k)-f(x_k,c_k)^{2}g(a_k,d_k)g(b_k,d_k)}{f(x_k,a_k)^{2}
g(b_k,c_k)g(b_k,d_k)}\frac{g(a_k,b_k)}{g(c_k,d_k)}U_{k-1}\\
&=T_1(n)+T_2(n),
\end{align*}
where $T_1(n)$ and $T_2(n)$ are given by \eqref{summand-0} and \eqref{summand-1},  respectively.
The theorem is thereby proved.
\qed

As a matter of fact, we are able to set up more general transformation via the same argument.
\begin{dl}\label{mainthm-three}Suppose that $f\perp g$, $g(x,y)=-g(y,x)$ and $\{U_n\}_{n\geq 0}$ is given by \eqref{summand}. Then, for any integer $n$ and arbitrary sequence $\{V_n\}_{n\geq 0}$, as well as pairwise distinct sequence $\{x_n,a_n,b_n,c_n,d_n\}_{n\geq 0}$, we have
\begin{align}
 S_1(n)+S_2(n)=U_{0}V_1-U_{n}V_n+\sum_{k=1}^{n-1} U_k(V_{k+1}-V_{k}),   
\end{align}
where
\begin{subequations}
\begin{align}
S_1(n)&:=\sum^{n}_{k=1}\frac{f(x_k,d_k)^2g(a_k,b_k)g(a_k,c_k)}{f(x_k,a_k)^{2}
g(d_k,b_k)g(d_k,c_k)}U_{k-1}V_k,\label{summand-000}\\
S_2(n)&:=\sum^{n}_{k=1}\frac{f(x_k,c_k)^{2}g(a_k,b_k)g(a_k,d_k)}{f(x_k,a_k)^{2}
g(c_k,b_k)g(c_k,d_k)}U_{k-1}V_k.\label{summand-100}
\end{align}
\end{subequations}
\end{dl}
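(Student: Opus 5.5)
The plan is to repeat the argument for Theorem~\ref{mainthm-two}, but to weight each telescoping step by $V_k$. Everything rests on the termwise identity already obtained in the proof of Theorem~\ref{mainthm-two} from \eqref{www-www-www-555}: for each $k\geq 1$,
\begin{align*}
U_{k-1}-U_k=\bigg(1-\frac{U_k}{U_{k-1}}\bigg)U_{k-1}
=\frac{f(x_k,d_k)^2g(a_k,b_k)g(a_k,c_k)}{f(x_k,a_k)^{2}g(d_k,b_k)g(d_k,c_k)}U_{k-1}
+\frac{f(x_k,c_k)^{2}g(a_k,b_k)g(a_k,d_k)}{f(x_k,a_k)^{2}g(c_k,b_k)g(c_k,d_k)}U_{k-1}.
\end{align*}
This is just the identity $T_1(n)-T_1(n-1)+T_2(n)-T_2(n-1)=U_{n-1}-U_n$ read off at the single index $k$. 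I will take it directly from that computation, in which the ratio $U_k/U_{k-1}$ is the $k$-th factor of \eqref{summand} and the numerator is rewritten by \eqref{www-www-www-555} with $a\to x_k$ and $(d_1,d_2,d_3,d_4)\to(a_k,b_k,c_k,d_k)$. The antisymmetry $g(x,y)=-g(y,x)$ is used there to match the sign conventions $g(d_k,b_k)$, $g(c_k,b_k)$ appearing in \eqref{summand-000}--\eqref{summand-100}.

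Next I multiply this termwise identity by $V_k$ and sum over $k=1,\dots,n$. The left side becomes exactly $S_1(n)+S_2(n)$, by the definitions \eqref{summand-000} and \eqref{summand-100}. The right side becomes $\sum_{k=1}^n (U_{k-1}-U_k)V_k$.

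The remaining step is Abel summation. I would write
\[
\sum_{k=1}^n U_{k-1}V_k-\sum_{k=1}^n U_kV_k=U_0V_1+\sum_{k=1}^{n-1}U_kV_{k+1}-\sum_{k=1}^{n-1}U_kV_k-U_nV_n ,
\]
which is $U_0V_1-U_nV_n+\sum_{k=1}^{n-1}U_k(V_{k+1}-V_k)$, as claimed.

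There is no real obstacle here. The only points requiring care are the index shift in the summation by parts and the bookkeeping of signs from the antisymmetry of $g$ in the termwise identity. Both are already handled in the proof of Theorem~\ref{mainthm-two}, which is recovered as the special case $V_k\equiv 1$.
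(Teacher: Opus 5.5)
Your proposal is correct and is essentially the paper's argument. The paper likewise weights the termwise identity from the proof of Theorem~\ref{mainthm-two} (obtained from \eqref{www-www-www-555}) by $V_k$ and then applies Abel's summation by parts $\sum_{k=1}^{n}(U_{k-1}-U_{k})V_k=U_0V_1-U_nV_n+\sum_{k=1}^{n-1}U_k(V_{k+1}-V_k)$; your explicit sign check via the antisymmetry of $g$ and your index shift simply spell out what the paper leaves implicit.
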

\pf The consequence follows from Abel's lemma on  summations by parts (cf. \cite[p.~313]{knopp}), namely
\begin{align*}
  \sum_{k=1}^{n}(U_{k-1}-U_{k}) V_k=U_{0}V_1-U_{n}V_n+\sum_{k=1}^{n-1} U_k(V_{k+1}-V_{k}).
\end{align*}
\qed

At the end of this paper, we consider a few concrete algebraic and triangular function identities, including the    transformations for basic/elliptic hypergeometric series covered  Theorem \ref{mainthm-two}. As for applications of Theorem \ref{mainthm-three}, we postpone for forthcoming study. In addition,  all verifications of $f\perp g$ for the relevant $f(x, y)$ and $g(x, y)$ is left to the reader but only referring to \cite{fgsummation} for details.
\begin{tl}[$f(x,y)=g(x,y)=x-y$]
       \begin{align}
    &\sum^{n}_{k=1}\frac{(x_k-d_k)^2(a_k-b_k)(a_k-c_k)}{(x_k-a_k)^{2}
(d_k-b_k)(d_k-c_k)}\prod_{i=0}^{k-1}\frac{(x_i-b_i)^{2}
  (a_i-c_i)(a_i-d_i)}{(x_i-a_i)^{2}
(b_i-c_i)(b_i-d_i)}\nonumber\\
&+\sum^{n}_{k=1}\frac{(x_k-c_k)^{2}(a_k-b_k)(a_k-d_k)}{(x_k-a_k)^{2}
(c_k-b_k)(c_k-d_k)}\prod_{i=0}^{k-1}\frac{(x_i-b_i)^{2}
  (a_i-c_i)(a_i-d_i)}{(x_i-a_i)^{2}
(b_i-c_i)(b_i-d_i)}\label{idnew-1}\\
&=\frac{(x_0-b_0)^{2}
  (a_0-c_0)(a_0-d_0)}{(x_0-a_0)^{2}
(b_0-c_0)(b_0-d_0)}-\prod_{k=0}^n\frac{(x_k-b_k)^{2}
  (a_k-c_k)(a_k-d_k)}{(x_k-a_k)^{2}
(b_k-c_k)(b_k-d_k)}.\nonumber
\end{align} 
\end{tl}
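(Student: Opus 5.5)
This corollary is the specialization $f(x,y)=g(x,y)=x-y$ of Theorem \ref{mainthm-two}, so the plan is to verify the hypotheses and then substitute. First, $g(x,y)=x-y$ is clearly antisymmetric, $g(x,y)=-g(y,x)$. Second, we need $f\perp g$, i.e.\ Jacobi's identity \eqref{jacobi}:
\begin{align*}
(x-a)(b-c)+(x-b)(c-a)+(x-c)(a-b)=0.
\end{align*}
The coefficient of $x$ is $(b-c)+(c-a)+(a-b)=0$. The constant term is $-a(b-c)-b(c-a)-c(a-b)=0$. Hence the identity holds for all $x,a,b,c$.

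With both hypotheses in place, Theorem \ref{mainthm-two} applies to any pairwise distinct sequence $\{x_n,a_n,b_n,c_n,d_n\}_{n\geq 0}$. The substitution is then mechanical.
\begin{itemize}
\item In $T_1(n)$ from \eqref{summand-0}, the summand factor becomes $(x_k-d_k)^2(a_k-b_k)(a_k-c_k)/\big((x_k-a_k)^2(d_k-b_k)(d_k-c_k)\big)$. This is the first sum in \eqref{idnew-1}.
\item $T_2(n)$ from \eqref{summand-1} likewise gives the second sum in \eqref{idnew-1}.
\item $U_{k-1}$ from \eqref{summand-2} becomes $\prod_{i=0}^{k-1}(x_i-b_i)^2(a_i-c_i)(a_i-d_i)/\big((x_i-a_i)^2(b_i-c_i)(b_i-d_i)\big)$, which is the product appearing in both sums.
\end{itemize}
On the right-hand side, $U_0$ is the single factor with index $0$, and $U_n$ is the full product over $k=0,\dots,n$. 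This matches the right side of \eqref{idnew-1} term by term.

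There is no real obstacle. The only care needed is to track the sign conventions $g(d_k,b_k)=d_k-b_k$ and $g(c_k,b_k)=c_k-b_k$ exactly as they appear in $T_1(n)$ and $T_2(n)$. We also need to note that the distinctness assumptions keep every denominator nonzero.
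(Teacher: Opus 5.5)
Your proposal is correct and follows the paper's route exactly: you specialize Theorem \ref{mainthm-two} to $f(x,y)=g(x,y)=x-y$ and substitute $T_1(n)$, $T_2(n)$, $U_{k-1}$, $U_0$, $U_n$ term by term. The paper leaves the check of $f\perp g$ to the reader, and your comparison of the $x$-coefficient and constant term supplies that check correctly.
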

As far as $q$-series is concerned, we set in \eqref{idnew-1} that $a_i=a
,b_i=b (a\neq b)$ and $$x_i=abxq^i,c_i=abcq^i,d_i=abdq^i$$ and establish a new transformation which  seems to have been unknown in the literature.
\begin{prop}For any integer $n\geq 0$, we have 
\begin{align*}
  &\sum^{n}_{k=1}\frac{(ax;q)_{k}^{2}
  (bc;q)_{k+1}(bd;q)_k}{(bxq;q)_{k}^{2}
  (ac;q)_k(ad;q)_{k+1}}q^k-\frac{(x-c)^{2}}{(x-d)^{2}}\sum^{n}_{k=1}\frac{(ax;q)_k^{2}
  (bc;q)_k(bd;q)_{k+1}}{(bxq;q)_{k}^{2}
  (ac;q)_{k+1}(ad;q)_k}q^k\nonumber\\
&\qquad=\frac{(c-d)(1-ax)^{2}
  (1-bc)(1-bd)}{(a-b)(x-d)^{2}
(1-ac)(1-ad)}\bigg(1-\frac{(axq;q)_{n}^{2}
  (bcq;q)_{n}(bdq;q)_{n}}{(bxq;q)_{n}^{2}
  (acq;q)_{n}(adq;q)_{n}}\bigg).
\end{align*}    
\end{prop}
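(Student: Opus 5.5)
The plan is to specialize the algebraic identity \eqref{idnew-1}, which is the case $f(x,y)=g(x,y)=x-y$ of Theorem \ref{mainthm-two}, by substituting $a_i=a$, $b_i=b$, $x_i=abxq^i$, $c_i=abcq^i$, $d_i=abdq^i$. After that, each factor becomes a factor of a $q$-shifted factorial, and the rest is bookkeeping. Since \eqref{idnew-1} holds for arbitrary pairwise distinct parameters, no further identity is needed. We assume generic $a,b,c,d,x$ so that the specialized points are pairwise distinct and all denominators are nonzero.

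First I compute the elementary factors:
\begin{align*}
&x_i-b_i=-b(1-axq^i),\qquad x_i-a_i=-a(1-bxq^i),\\
&a_i-c_i=a(1-bcq^i),\qquad a_i-d_i=a(1-bdq^i),\\
&b_i-c_i=b(1-acq^i),\qquad b_i-d_i=b(1-adq^i).
\end{align*}
The powers of $a$ and $b$ cancel in the ratio $U_i/U_{i-1}$, so
$$U_n=\frac{(ax;q)_{n+1}^2(bc;q)_{n+1}(bd;q)_{n+1}}{(bx;q)_{n+1}^2(ac;q)_{n+1}(ad;q)_{n+1}}.$$
Hence the right-hand side of \eqref{idnew-1} equals $U_0\bigl(1-U_n/U_0\bigr)$, and $U_n/U_0$ is exactly the ratio of $q$-factorials with the bases shifted by $q$ that appears in the statement.

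Next I handle the summands. For the first sum I use
\begin{align*}
&x_k-d_k=abq^k(x-d),\qquad a_k-b_k=a-b,\\
&d_k-b_k=-b(1-adq^k),\qquad d_k-c_k=abq^k(d-c).
\end{align*}
With these, the $k$-th coefficient in $T_1$ simplifies to
$$\frac{q^k(x-d)^2(a-b)(1-bcq^k)}{(c-d)(1-bxq^k)^2(1-adq^k)}.$$
Multiplying by $U_{k-1}$ gives
$$\frac{(a-b)(x-d)^2}{c-d}\,q^k\,\frac{(ax;q)_k^2(bc;q)_{k+1}(bd;q)_k}{(bx;q)_{k+1}^2(ac;q)_k(ad;q)_{k+1}}.$$
The second sum is the same with $c\leftrightarrow d$, which produces the factor $(x-c)^2/(d-c)$, that is, a minus sign relative to the first.

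Finally I rewrite $(bx;q)_{k+1}=(1-bx)(bxq;q)_k$ and multiply both sides by $(c-d)(1-bx)^2/\bigl((a-b)(x-d)^2\bigr)$. The left side then becomes exactly the stated difference of sums. The right side becomes
$$\frac{(c-d)(1-bx)^2}{(a-b)(x-d)^2}\,U_0\bigl(1-U_n/U_0\bigr),$$
and since $U_0=(1-ax)^2(1-bc)(1-bd)/\bigl((1-bx)^2(1-ac)(1-ad)\bigr)$, the factors $(1-bx)^2$ cancel and the claimed right-hand side results. The only delicate step is the sign and power bookkeeping for $a$, $b$ and $q^k$ in the summand of $T_1$, in particular checking that the orientation $(d-c)$ versus $(c-d)$ yields the relative minus sign between the two sums. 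I would verify this by redoing the $T_2$ computation independently rather than relying on symmetry.
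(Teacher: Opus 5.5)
Your proposal is correct and takes the paper's route: the paper obtains this proposition by the same substitution $a_i=a$, $b_i=b$, $x_i=abxq^i$, $c_i=abcq^i$, $d_i=abdq^i$ in \eqref{idnew-1}, and your factor computations check out, including the relative sign coming from $d_k-c_k=abq^k(d-c)$ versus $c_k-d_k$ and the cancellation of $(1-bx)^2$ against $U_0$. The only implicit point is that \eqref{idnew-1} needs distinctness only within each index $k$, not across the constant sequences $a_i=a$, $b_i=b$, and your genericity assumption covers this.
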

\vspace{8pt}
\begin{tl}[$f(x,y)=\sin(x-y),$  $g(x,y)=\sin(x-y)$]
 \begin{align*}
    &\sum^{n}_{k=1}\frac{\sin(x_k-d_k)^2\sin(a_k-b_k)\sin(a_k-c_k)}{\sin(x_k-a_k)^{2}
\sin(d_k-b_k)\sin(d_k-c_k)}\\&\qquad\qquad\times\prod_{i=0}^{k-1}\frac{\sin(x_i-b_i)^{2}
  \sin(a_i-c_i)\sin(a_i-d_i)}{\sin(x_i-a_i)^{2}
\sin(b_i-c_i)\sin(b_i-d_i)}\nonumber\\
&+\sum^{n}_{k=1}\frac{\sin(x_k-c_k)^{2}\sin(a_k-b_k)\sin(a_k-d_k)}{\sin(x_k-a_k)^{2}
\sin(c_k-b_k)\sin(c_k-d_k)}\\
&\qquad\qquad\times\prod_{i=0}^{k-1}\frac{\sin(x_i-b_i)^{2}
  \sin(a_i-c_i)\sin(a_i-d_i)}{\sin(x_i-a_i)^{2}
\sin(b_i-c_i)\sin(b_i-d_i)}\\
&=\frac{\sin(x_0-b_0)^{2}
  \sin(a_0-c_0)\sin(a_0-d_0)}{\sin(x_0-a_0)^{2}
\sin(b_0-c_0)\sin(b_0-d_0)}\\
&\qquad\qquad-\prod_{k=0}^n\frac{\sin(x_k-b_k)^{2}
  \sin(a_k-c_k)\sin(a_k-d_k)}{\sin(x_k-a_k)^{2}
\sin(b_k-c_k)\sin(b_k-d_k)}.\nonumber
\end{align*}
\end{tl}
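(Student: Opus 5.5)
The plan is to obtain this identity as the specialization $f(x,y)=g(x,y)=\sin(x-y)$ of Theorem \ref{mainthm-two}. With this choice the summands of $T_1(n)$ in \eqref{summand-0}, $T_2(n)$ in \eqref{summand-1}, and the products $U_{k-1}$, $U_0$, $U_n$ in \eqref{summand-2} become exactly the sine expressions in the displayed identity. So the only work is to check the two hypotheses of Theorem \ref{mainthm-two}, namely $g(x,y)=-g(y,x)$ and $f\perp g$, and to note the non-degeneracy conditions.

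Antisymmetry is immediate from $\sin(y-x)=-\sin(x-y)$. For $f\perp g$ we must prove Jacobi's identity \eqref{jacobi}, which here reads
\begin{align*}
\sin(x-a)\sin(b-c)+\sin(x-b)\sin(c-a)+\sin(x-c)\sin(a-b)=0 .
\end{align*}
I would multiply by $2$ and apply the product-to-sum formula $2\sin A\sin B=\cos(A-B)-\cos(A+B)$ to each of the three terms. This produces six cosines:
\begin{align*}
&\cos(x-a-b+c)-\cos(x-a+b-c)+\cos(x-b-c+a)\\
&\quad-\cos(x-b+c-a)+\cos(x-c-a+b)-\cos(x-c+a-b).
\end{align*}
They cancel in pairs, since the arguments of the first and fourth terms coincide, as do those of the second and fifth, and of the third and sixth. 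Hence the identity holds, and $\sin(x-y)\perp\sin(x-y)$.

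The only point needing care, which I would regard as the mild obstacle, is the meaning of ``pairwise distinct'' in Theorem \ref{mainthm-two}. For the sine kernel, the denominators $\sin(x_k-a_k)$, $\sin(b_k-c_k)$, $\sin(b_k-d_k)$, $\sin(d_k-b_k)$, $\sin(d_k-c_k)$, $\sin(c_k-b_k)$, $\sin(c_k-d_k)$ must be nonzero. So the parameters should be assumed pairwise incongruent modulo $\pi$, rather than merely distinct. Under that assumption the telescoping argument in the proof of Theorem \ref{mainthm-two} goes through verbatim. That argument uses \eqref{www-www-www-555}, derived from the case $n=3$ of Theorem \ref{mainthm-before}, and that theorem requires only $f\perp g$ and the antisymmetry of $g$. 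Substituting the sine kernel into $T_1(n)+T_2(n)=U_0-U_n$ then yields the stated corollary.
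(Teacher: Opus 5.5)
Your proposal is correct and takes the paper's route: the corollary is the specialization $f(x,y)=g(x,y)=\sin(x-y)$ of Theorem \ref{mainthm-two}, where the paper leaves the check of $f\perp g$ to the reader, and your product-to-sum cancellation supplies that check correctly. Your remark that ``pairwise distinct'' must be read as pairwise incongruent modulo $\pi$ for the sine kernel is a sensible sharpening of the paper's hypothesis.
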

\begin{tl}[$f(x,y)=\cos(x-y),$ $g(x,y)=\sin(x-y)$]
\begin{align*}
 &\sum^{n}_{k=1}\frac{\cos(x_k-d_k)^2\sin(a_k-b_k)\sin(a_k-c_k)}{\cos(x_k-a_k)^{2}
\sin(d_k-b_k)\sin(d_k-c_k)}\\
&\qquad\qquad\times\prod_{i=0}^{k-1}\frac{\cos(x_i-b_i)^{2}
  \sin(a_i-c_i)\sin(a_i-d_i)}{\cos(x_i-a_i)^{2}
\sin(b_i-c_i)\sin(b_i-d_i)}\nonumber\\
&+\sum^{n}_{k=1}\frac{\cos(x_k-c_k)^{2}\sin(a_k-b_k)\sin(a_k-d_k)}{\cos(x_k-a_k)^{2}
\sin(c_k-b_k)\sin(c_k-d_k)}\\
&\qquad\qquad\times\prod_{i=0}^{k-1}\frac{\cos(x_i-b_i)^{2}
  \sin(a_i-c_i)\sin(a_i-d_i)}{\cos(x_i-a_i)^{2}
\sin(b_i-c_i)\sin(b_i-d_i)}\\
&=\frac{\cos(x_0-b_0)^{2}
  \sin(a_0-c_0)\sin(a_0-d_0)}{\cos(x_0-a_0)^{2}
\sin(b_0-c_0)\sin(b_0-d_0)}\\&\qquad\qquad-\prod_{k=0}^n\frac{\cos(x_k-b_k)^{2}
  \sin(a_k-c_k)\sin(a_k-d_k)}{\cos(x_k-a_k)^{2}
\sin(b_k-c_k)\sin(b_k-d_k)}.\nonumber
\end{align*}
\end{tl}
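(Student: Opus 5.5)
The plan is to obtain this corollary as the specialization $f(x,y)=\cos(x-y)$, $g(x,y)=\sin(x-y)$ of Theorem \ref{mainthm-two}. For this I only need the two hypotheses of that theorem: $g(x,y)=-g(y,x)$ and $f\perp g$, i.e. Jacobi's identity \eqref{jacobi}. Once both hold, substituting $f$ and $g$ into \eqref{summand-0}, \eqref{summand-1} and \eqref{summand-2} reproduces the two sums and the right-hand side $U_0-U_n$ of the displayed identity term by term. Antisymmetry is immediate, since $\sin(x-y)=-\sin(y-x)$.

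For Jacobi's identity I would check
\begin{align*}
\cos(x-a)\sin(b-c)+\cos(x-b)\sin(c-a)+\cos(x-c)\sin(a-b)=0
\end{align*}
directly, using the product-to-sum formula $2\cos A\sin B=\sin(A+B)-\sin(A-B)$. The three terms produce six sines, and they cancel in pairs:
\begin{itemize}
\item $\sin(x-a+b-c)$ from the first term cancels $-\sin(x-c-a+b)$ from the third;
\item $-\sin(x-a-b+c)$ from the first term cancels $\sin(x-b+c-a)$ from the second;
\item $-\sin(x-b-c+a)$ from the second term cancels $\sin(x-c+a-b)$ from the third.
\end{itemize}
The sum is therefore zero, so $f\perp g$.

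The only point needing care is well-definedness, which is where the word ``distinct'' in Theorem \ref{mainthm-two} must be interpreted. Every denominator must be nonzero, so I would assume $\cos(x_k-a_k)\neq 0$ and that $b_k,c_k,d_k$ are pairwise distinct modulo $\pi$. Under these assumptions the statement follows verbatim from Theorem \ref{mainthm-two}. The proof of that theorem used only \eqref{www-www-www-555}, which comes from Theorem \ref{mainthm-before} via \eqref{www-www-www-www-444}, so no additional hypothesis is required. I expect no real obstacle here; the one step requiring actual checking is the pairing of the six sine terms above.
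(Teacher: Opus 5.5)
Your proposal is correct and follows the paper's route. The paper obtains this corollary as the specialization $f(x,y)=\cos(x-y)$, $g(x,y)=\sin(x-y)$ of Theorem \ref{mainthm-two}, and it leaves the check of Jacobi's identity \eqref{jacobi} to the reader (citing \cite{fgsummation}); your product-to-sum verification supplies that check and pairs the six sine terms correctly.

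Your claim that only $\cos(x_k-a_k)\neq 0$ and pairwise distinctness of $b_k,c_k,d_k$ modulo $\pi$ are needed also holds. Deriving \eqref{www-www-www-www-444} from Theorem \ref{mainthm-before} with $a_k$ among the nodes would formally require further nonvanishing conditions. However, \eqref{www-www-www-www-444} is an identity between trigonometric polynomials, so by continuity it holds for all arguments.
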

The next identity is an elliptic analogue of \eqref{idnew-1}. 
\begin{tl}[$f(x,y)=g(x,y)=y\theta(xy,x/y;q)$]
 \begin{align}
    &\sum^{n}_{k=1}\frac{\theta\left(x_kd_k,  d_k/x_k ; q\right)^{2}
  \theta\left( a_kb_k, a_k/ b_k,a_kc_k, a_k/ c_k; q\right)}{\theta\left(x_ka_k,  a_k/x_k ; q\right)^{2}
  \theta\left(d_kb_k, d_k/b_k, c_kd_k, d_k/c_k; q\right)}\nonumber\\
  &\qquad\qquad\times\prod_{i=0}^{k-1}\frac{\theta\left(x_ib_i, b_i/x_i ; q\right)^{2}
  \theta\left(a_ic_i, a_i/ c_i, a_id_i, a_i/ d_i; q\right)}{\theta\left(x_ia_i, a_i/x_i ; q\right)^{2}
  \theta\left(b_ic_i, b_i/ c_i, b_id_i, b_i/ d_i; q\right)}\nonumber\\
&+\sum^{n}_{k=1}\frac{\theta\left(x_kc_k,c_k/ x_k ; q\right)^{2}
  \theta\left(a_kb_k, a_k/ b_k, a_kd_k, a_k/ d_k; q\right)}{\theta\left(x_ka_k, a_k/x_k ; q\right)^{2}
  \theta\left(b_kc_k, c_k/b_k, c_kd_k, c_k/ d_k; q\right)}\nonumber\\
  &\qquad\qquad\times\prod_{i=0}^{k-1}\frac{\theta\left(x_ib_i, b_i/x_i ; q\right)^{2}
  \theta\left(a_ic_i, a_i/ c_i, a_id_i, a_i/ d_i; q\right)}{\theta\left(x_ia_i, a_i/x_i ; q\right)^{2}
  \theta\left(b_ic_i, b_i/ c_i, b_id_i, b_i/ d_i; q\right)}\label{idnew-123}\\
&=\frac{\theta\left(x_0b_0, b_0/x_0 ; q\right)^{2}
  \theta\left(a_0c_0, a_0/ c_0, a_0d_0, a_0/ d_0; q\right)}{\theta\left(x_0a_0, a_0/x_0 ; q\right)^{2}
  \theta\left(b_0c_0, b_0/ c_0, b_0d_0, b_0/ d_0; q\right)}\nonumber\\
  &\qquad\qquad-\prod_{k=0}^n\frac{\theta\left(x_kb_k, b_k/ x_k ; q\right)^{2}
  \theta\left(a_kc_k, a_k/ c_k, a_kd_k, a_k/ d_k; q\right)}{\theta\left(x_ka_k, a_k/ x_k ; q\right)^{2}
  \theta\left(b_kc_k, b_k/ c_k, b_kd_k, b_k/ d_k; q\right)}.\nonumber
\end{align} 
\end{tl}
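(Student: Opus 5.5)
The plan is to specialise Theorem \ref{mainthm-two} to $f(x,y)=g(x,y)=y\,\theta(xy,x/y;q)$. The work has three parts: check that this pair satisfies the hypotheses of Theorem \ref{mainthm-two}, substitute it into $T_1(n)$, $T_2(n)$ and $U_n$, and cancel the prefactors $y$ so that only theta quotients remain.

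\emph{Step 1: antisymmetry.} Using $\theta(q/z;q)=\theta(z;q)$ and $\theta(1/z;q)=-z^{-1}\theta(z;q)$, I get
\[
g(y,x)=x\,\theta(xy,y/x;q)=x\cdot\Big(-\frac{y}{x}\Big)\theta(xy,x/y;q)=-g(x,y).
\]

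\emph{Step 2: $f\perp g$.} Since $f=g$, Jacobi's identity \eqref{jacobi} reads
\[
g(x,a)g(b,c)+g(x,b)g(c,a)+g(x,c)g(a,b)=0.
\]
I will derive this from Weierstrass' identity \eqref{weierstrass} by renaming parameters and clearing the monomial prefactors $a,b,c$ with the same reflection rule $\theta(1/z)=-z^{-1}\theta(z)$. Concretely, after antisymmetry, \eqref{weierstrass} is a three-term relation among products $\theta(uv,u/v)\theta(st,s/t)$ on the index pairs $\{x,c\},\{b,z\}$; $\{x,b\},\{c,z\}$; $\{b,c\},\{x,z\}$, with one coefficient $z/c$. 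Multiplying through by suitable monomials should turn this into exactly the display above. Alternatively, the paper allows citing \cite{fgsummation} for this verification.

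\emph{Step 3: substitution.} Every ratio in \eqref{summand-0}--\eqref{summand-2} has the form
\[
\frac{f(x_k,d_k)^2\,g(a_k,b_k)\,g(a_k,c_k)}{f(x_k,a_k)^2\,g(d_k,b_k)\,g(d_k,c_k)} .
\]
Substituting gives a $y$-prefactor quotient of
\[
\frac{d_k^2\, b_k c_k}{a_k^2\, b_k c_k}=\frac{d_k^2}{a_k^2},
\]
times a quotient of thetas. That quotient contains terms such as $\theta(x_kd_k,x_k/d_k)$, whereas the target has $\theta(x_kd_k,d_k/x_k)$. Converting $\theta(x_k/d_k)=-(x_k/d_k)\theta(d_k/x_k)$ squared contributes $x_k^2/d_k^2$, and the matching conversion in the denominator contributes $a_k^2/x_k^2$. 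These cancel the prefactor $d_k^2/a_k^2$ exactly. Similarly, $\theta(d_kb_k,d_k/b_k)$ and $\theta(c_kd_k,d_k/c_k)$ appear directly in the denominator, and $\theta(a_kb_k,a_k/b_k)$, $\theta(a_kc_k,a_k/c_k)$ directly in the numerator, so no further factors arise. The same bookkeeping handles $T_2$, where $\theta(b_kc_k,c_k/b_k)$ appears, and $U_n$.

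Once all monomials are shown to cancel, the three pieces of Theorem \ref{mainthm-two} become exactly the sums and products in \eqref{idnew-123}, and the corollary follows. I expect the main obstacle to be Step 2: matching \eqref{weierstrass} to the three-term form needs a careful choice of parameter renaming and control of signs and monomials. If a direct match is awkward, I would instead check the identity by the standard theta-function argument: treat it as a function of $x$, show it has the quasi-periodicity of a degree-two theta function, and note that it vanishes at $x=a$ and $x=b$ as well as at $x=1/a$ and $x=1/b$.
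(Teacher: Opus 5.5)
Your proposal is correct and matches the paper's proof. Like the paper, you specialise Theorem \ref{mainthm-two} to $f=g=y\,\theta(xy,x/y;q)$ and use $\theta(z;q)=-z\,\theta(1/z;q)$ both for antisymmetry and to cancel the monomial prefactors; your bookkeeping $\frac{d_k^2}{a_k^2}\cdot\frac{x_k^2}{d_k^2}\cdot\frac{a_k^2}{x_k^2}=1$ is right, and the same cancellation works for $T_2$ and $U_n$. The check of $f\perp g$, which the paper leaves to the reader via \cite{fgsummation}, goes through as you sketch: rename $z\mapsto a$ in \eqref{weierstrass}, multiply by $bc$ and apply the reflection rule, and you get exactly $g(x,a)g(b,c)+g(x,b)g(c,a)+g(x,c)g(a,b)=0$.
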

\pf The consequence follows after some simplifications by the relation
\begin{align*}\theta(z;q)&=\theta(q/z;q)=-z\theta(1/z;q).
 \end{align*}\qed

 With elliptic hypergeometric series in mind, we  need to set in \eqref{idnew-123} that $c_i=c
,d_i=d~(c\neq d)$ and $$x_i=x,a_i=ap^i,b_i=bp^i$$ for seeking such a kind of  transformation formulas.
\begin{prop}Let $(x;p,q)_n$ be defined by \eqref{ellptic-shift}. Then for any integer $n\geq 0$, we have
 \begin{align}
  \chi(c,d)&\sum^{n}_{k=1}\frac{
  \theta\left( abp^{2k},acp^k, ap^k/ c; q\right)}{\theta\left(xap^k,  ap^k/x ; q\right)^{2}
  \theta\left(bdp^k, dp^{-k}/b; q\right)}\nonumber\\
  &\qquad\qquad\times\frac{\left(xb, b/x ; p,q\right)_k^{2}
  \left(ac, a/ c, ad, a/ d; p,q\right)_k}{\left(xa, a/x ; p,q\right)_k^{2}
  \left(bc, b/ c, bd, b/ d; p,q\right)_k}\nonumber\\
+\chi(d,c)&\sum^{n}_{k=1}\frac{
  \theta\left(abp^{2k}, adp^k, ap^k/ d; q\right)}{\theta\left(xap^k, ap^k/x ; q\right)^{2}
  \theta\left(bcp^k, cp^{-k}/b; q\right)}\nonumber\\
  &\qquad\qquad\times\frac{\left(xb, b/x ; p,q\right)_k^{2}
  \left(ac, a/ c, ad, a/ d; p,q\right)_k}{\left(xa, a/x ; p,q\right)_k^{2}
  \left(bc, b/ c, bd, b/ d; p,q\right)_k}\label{idnew-888}
 \end{align}
 \begin{align}
&=\frac{\theta\left(xb, b/x ; q\right)^{2}
  \theta\left(ac, a/ c, ad, a/ d; q\right)}{\theta\left(xa, a/x ; q\right)^{2}
  \theta\left(bc, b/ c, bd, b/ d; q\right)}\nonumber\\
  &\qquad\qquad-\frac{\left(xb, b/x ; p,q\right)_{n+1}^{2}
  \left(ac, a/ c, ad, a/ d; p,q\right)_{n+1}}{\left(xa, a/x ; p,q\right)_{n+1}^{2}
  \left(bc, b/ c, bd, b/ d; p,q\right)_{n+1}},\nonumber
\end{align}
where the function 
$$\chi(c,d):=\frac{\theta\left(xd,  d/x ; q\right)^{2}\theta\left(a/b; q\right)}{
  \theta\left(cd, d/c; q\right)}.$$
\end{prop}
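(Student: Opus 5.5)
The plan is to specialize the elliptic identity \eqref{idnew-123} exactly as indicated before the statement. We take $c_i=c$, $d_i=d$, $x_i=x$, $a_i=ap^i$ and $b_i=bp^i$, and then rewrite every piece in terms of the $p,q$-shifted factorials \eqref{ellptic-shift}. Since \eqref{idnew-123} holds for arbitrary sequences (subject only to pairwise distinctness), no new functional input is needed. The whole task is bookkeeping of theta factors, using
\begin{align*}
\theta(z;q)=\theta(q/z;q)=-z\theta(1/z;q).
\end{align*}

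First I will handle the products. With the substitution, the $i$-th factor of $U_k$ is
\begin{align*}
\frac{\theta(xbp^i,bp^i/x;q)^2\,\theta(acp^i,ap^i/c,adp^i,ap^i/d;q)}{\theta(xap^i,ap^i/x;q)^2\,\theta(bcp^i,bp^i/c,bdp^i,bp^i/d;q)}.
\end{align*}
The product over $i=0,\dots,k-1$ is therefore exactly the ratio of $(\cdot;p,q)_k$ factorials that appears in the statement. Consequently $U_0$ gives the first term on the right-hand side, and $U_n$ gives the ratio with index $n+1$, because $\prod_{k=0}^n$ has $n+1$ factors. This disposes of the right-hand side and of the common factor $U_{k-1}$ in both sums.

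Next I will treat the extra summand factors. For $T_1$, the factor is
\begin{align*}
\frac{\theta(xd,d/x;q)^2\,\theta(abp^{2k},a/b,acp^k,ap^k/c;q)}{\theta(xap^k,ap^k/x;q)^2\,\theta(dbp^k,d/(bp^k),cd,d/c;q)}.
\end{align*}
Here $\theta(a_kb_k;q)=\theta(abp^{2k};q)$, $\theta(a_k/b_k;q)=\theta(a/b;q)$, and $\theta(d_k/b_k;q)=\theta(dp^{-k}/b;q)$. The $k$-independent pieces collect into $\chi(c,d)$, which leaves precisely the first sum of \eqref{idnew-888}. For $T_2$ I will apply the same substitution with $c$ and $d$ interchanged. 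The denominator then contains $\theta(c_kd_k,c_k/d_k;q)=\theta(cd,c/d;q)$, whereas $\chi(d,c)$ has $\theta(dc,c/d;q)$, so this matches directly. The numerator $\theta(xc,c/x;q)^2$ also matches $\chi(d,c)$. The remaining factor is $\theta(bcp^k,cp^{-k}/b;q)$, which is exactly what the statement displays.

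The main obstacle is sign and orientation consistency. In \eqref{idnew-123}, the $T_2$ denominator reads $\theta(b_kc_k,c_k/b_k,c_kd_k,c_k/d_k;q)$, while the $T_1$ denominator reads $\theta(d_kb_k,d_k/b_k,\dots)$. I need to check that the orientations produced by the simplification $f(x,y)=y\theta(xy,x/y;q)$ (the prefactors $y$ and the $-z\theta(1/z)$ flips) are the ones written in \eqref{idnew-123}. Otherwise a stray factor such as $-d/c$ would appear between $\chi(c,d)$ and $\chi(d,c)$. I would verify this by recomputing the ratios directly from $f=g=y\theta(xy,x/y;q)$ before specializing. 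If a discrepancy appears, I would absorb it using $\theta(d/c;q)=-(d/c)\theta(c/d;q)$.
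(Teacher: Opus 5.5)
Your proposal is correct and follows the paper's route: specialize \eqref{idnew-123} with $c_i=c$, $d_i=d$, $x_i=x$, $a_i=ap^i$, $b_i=bp^i$; identify $U_{k-1}$ and $U_n$ with the $(\cdot;p,q)_k$ and $(\cdot;p,q)_{n+1}$ ratios; and pull the $k$-independent factors out as $\chi(c,d)$ and $\chi(d,c)$ (the paper calls these computations routine and leaves them to the reader). The orientation worry does not materialize: with $f=g=y\theta(xy,x/y;q)$ the $y$-prefactors of the $g$'s cancel, and $d_k^2\theta(x_k/d_k;q)^2=x_k^2\theta(d_k/x_k;q)^2$ (likewise for $c_k$ and $a_k$), so \eqref{idnew-123} already has the stated orientation and no stray factor such as $-d/c$ appears.
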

\pf Since all computations are routine, we thus leave to the interested reader.\qed

\vspace{10pt}
\noindent{\bf Conflict of Interest Statement.} The authors declare that there is no competing interests relating to this work. 

\vspace{10pt}
\noindent{\bf Funding Declaration.} This work is supported by the National Natural Science Foundation of China [Grant
No. 12471315].


\begin{thebibliography}{99}
\bibliographystyle{amsplain}
\bibitem{10}   G. Gasper,   M. Rahman,  \emph{Basic Hypergeometric Series}, 2nd edition, Cambridge University Press, 2004.
    \bibitem{gasper}    G. Gasper, M.  Schlosser, \emph{Summation, transformation, and expansion formulas
for multibasic theta hypergeometric series}, Adv. Stud. Contemp. Math. \textbf{11 }(2005), 67--84.
\bibitem{koornwinder}T.  H.  Koornwinder, \emph{On the equivalence of two fundamental theta identities}, Anal. Appl. (Singap.) \textbf{12} (2014),  711--725.
\bibitem{knopp}K. Knopp, \textit{Theory and Application of Infinite Series}, Dover Books on Mathematics, Dover Publications, Mineola, NY, 1990.
\bibitem{fgsummation}X. R. Ma,\textit{ The $(f, g)$-inversion formula and its applications: the $(f, g)$-summation formula,} Adv. Appl.
Math. \textbf{38(2)}  (2007), 227--257.
\bibitem{wangjinpaper}J. Wang, \emph{A new elliptic interpolation formula via the $(f,g)$-inversion}, Proc. Amer. Math. Soc. \textbf{148}  (2020), 3457--3471.

 \bibitem{warnaar}S. O. Warnaar, \emph{Summation and transformation formulas for elliptic hypergeometric series}, Constr. Approx. \textbf{18} (2002), 479--502.
\bibitem{xuxrma} J. N. Xu, X. R. Ma, \emph{On two elementary functional identities}, Bull. Malays. Math. Sci. Soc. \textbf{48} (2025), 181.
 \end{thebibliography}
\end{document}